\newtheorem{proposition}{Proposition}[section]
\tikzstyle{bigblock} = [draw, fill=blue!20, rectangle, 
\tikzstyle{medblock} = [draw, fill=blue!20, rectangle, 
\tikzstyle{mux} = [draw, fill=black!20, rectangle, 
\tikzstyle{smallblock} = [draw, fill=blue!20, rectangle, 
\tikzstyle{data_block} = [draw, fill=green!20, rectangle, 
\tikzstyle{ops_block} = [draw, fill=blue!20, rectangle, 
\tikzstyle{est_block} = [draw, fill=red!20, rectangle, 
\tikzstyle{sum} = [draw, fill=blue!20, circle, node distance=1cm,minimum height=0.5cm]
\tikzstyle{signal} = [coordinate]
\tikzstyle{pinstyle} = [pin edge={to-,thin,black}]
\tikzstyle{block} = [draw, fill=blue!20, rectangle, 
\tikzstyle{blockS} = [draw, fill=blue!20, rectangle, 
\tikzstyle{input} = [coordinate]
\tikzstyle{output} = [coordinate]
\tikzstyle{gain} = [draw, fill=blue!20, regular polygon, regular polygon sides=3, shape border rotate=30]
\tikzstyle{gain_vert} = [draw, fill=blue!20, regular polygon, regular polygon sides=3, shape border rotate=0]
\newcommand{\bc}{\begin{center}}
\newcommand{\ec}{\end{center}}
\newcommand{\benum}{\begin{enumerate}}
\newcommand{\eenum}{\end{enumerate}}
\newcommand{\matl}{\left[ \begin{array}}
\newcommand{\matr}{\end{array} \right]}
\renewcommand{\matl}{\begin{bmatrix}}
\renewcommand{\matr}{\end{bmatrix}}
\newcommand{\matls}{\left[ \begin{smallmatrix}}
\newcommand{\matrs}{\end{smallmatrix} \right]}
\newcommand{\isdef}{\stackrel{\triangle}{=}}
\newcommand{\inv}{^{-1}}
\newcommand{\tr}{{\rm tr}\,}
\newcommand{\rmN}{{\rm N}}
\newcommand{\rmT}{{\rm T}}
\newcommand{\BBR}{{\mathbb R}}
\newcommand{\SN}{{\mathcal N}}
\newcommand{\SP}{{\mathcal P}}
\newlist{todolist}{itemize}{2}
\setlist[todolist]{label=$\square$}
\title{Adaptive Thrust Regulation in Solid-fuel Ramjet \\ with Variable Geometry Inlet}
\author{\large Parham Oveissi, Ryan DeBoskey, Venkateswaran Narayanaswamy, and Ankit Goel
\footnote{Ryan DeBoskey and Venkateswaran Narayanaswamy are with the Department of Mechanical Engineering, North Carolina State University, Raleigh, NC 27695 {\tt\small rddebosk@ncsu.edu, vnaraya3@ncsu.edu }}
\footnote{Parham Oveissi and Ankit Goel are with the Department of Mechanical Engineering, University of Maryland, Baltimore County,1000 Hilltop Circle, Baltimore, MD 21250. {\tt\small parhamo1@umbc.edu, ankgoel@umbc.edu }}
}
\begin{document}
\maketitle

\begin{abstract}
    This paper presents the application of a novel data-driven adaptive control technique, dynamic mode adaptive control (DMAC), to regulate thrust in a solid-fuel ramjet (SFRJ).
    A quasi-static one-dimensional model of SFRJ with a variable geometry inlet is developed to compute thrust.
    An adaptive tracking controller is then designed using the DMAC framework, which leverages dynamic mode decomposition to approximate the local system behavior, followed by a tracking controller designed around the identified model.
    Simulation results demonstrate that DMAC achieves accurate thrust regulation across a range of commanded profiles and operating conditions, without requiring an analytical model of the SFRJ. 
    These findings indicate that DMAC provides a reliable and effective approach for model-free thrust regulation in an SFRJ with variable-geometry inlets as the control input.
\end{abstract}

\section{INTRODUCTION}
\label{sec:introduction}

Solid-fuel ramjets (SFRJ) promise greater propulsive performance and capability compared to traditional engines due to the lack of complex turbomachinery and the use of efficient high-density solid field \cite{Schulte1986}.   
%
However, precise control of SFRJs remains an open challenge due to complex combustion dynamics, which involve interactions between turbulent flame dynamics, solid-fuel pyrolysis, and complex hydrocarbon chemistry.  
A review of the progress and challenges in understanding the SFRJ combustion phenomena is compiled in several review articles \cite{KRISHNAN1998219,Gany_2009,Veraar_2022}.  

The traditional approach of regulating the SFRJ thrust is based on the bypass air channels \cite{natan1993experimental,pelosi2003bypass}. 
Experimentally, bypass ratio control has shown increased combustor efficiency \cite{evans2023performance}, but requires the addition of bypass piping, which increases the complexity of the SFRJ geometry whilst decreasing the theoretical capacity for solid-fuel loading. 
Recent work has investigated the potential of using variable inlet design as an alternative means for regulating SFRJ performance \cite{deboskey2025augmentation}.  
Variable inlet control presents potential advantages by minimizing the actuator hardware footprint and providing a novel mechanism to regulate air ingested for combustion and, thus, thrust.
However, due to the complex dynamic relationship between the inlet geometry and the thrust generated, which is sensitive to geometric variations of the SFRJ, as well as to variations in flow conditions and flight conditions, designing a control system to regulate the SFRJ thrust remains a challenging problem. 
The focus of this work is thus the investigation of an adaptive control technique to regulate the thrust generated by an SFRJ.

In this work, a quasi-static one-dimensional model of SFRJ developed in \cite{deboskey2025augmentation} is used. 
The control design approach used in this work is based on the dynamic mode adaptive control (DMAC) algorithm, which is described in detail in \cite{oveissi2025modelfreedynamicmodeadaptive}.
Although our previous work based on the retrospective cost adaptive control (RCAC) framework has been successfully applied to the thrust control problem in ramjets \cite{oveissi2025adaptive,oveissi2024adaptive,oveissi2023learning, goel2019output,goel2018retrospective, goel2015scramjet,deboskey2025situ, oveissi2025learning,manuel2025real}, RCAC requires the choice of a \textit{target filter} that captures the essential modeling information required to update the control law. 
However, the fixed filter choice limits the system's operational envelope and significantly complicates the design process in a multi-input, multi-output system.
In contrast, DMAC does not require any modeling information and instead uses measured data to identify a low-order dynamic approximation of the system and an adaptive linear controller, potentially enabling a larger operational envelope. 


The paper is organized as follows. 
Section \ref{sec:numerical_model} describes the numerical SFRJ model used to predict SFRJ thrust. 
Section \ref{sec:DMAC} briefly reviews the dynamic mode adaptive control framework to regulate the thrust of the SFRJ model.
Section \ref{sec:prelim_results} presents simulation results to demonstrate the application of the DMAC technique to regulate the SFRJ thrust. 
Finally, the paper concludes in Section \ref{sec:conclusions}.

\section{SFRJ Model}

\label{sec:numerical_model}
This study considers steady cruise operation of a model 140 mm external diameter SFRJ at a flight Mach number of 3.25 and an altitude of 30 km.
A constant freestream temperature and pressure are assumed based on the 1976 US Standard Atmosphere model \cite{standard1976atmosphere}.  Table \ref{cruise_condition} provides details of the selected cruise condition. 

\begin{table}[H]
\centering
\caption{Overview of selected cruise condition}\label{cruise_condition}
\begin{tabular}{ c  c  c  c }
\hline \hline
$M$ & $H$ [km] & $P_{t0}$ [Pa] & $T_{t0}$  [K]\\
\hline
3.25 & 30 & 63677 & 748.6  \\
\hline
\end{tabular}
\end{table}


This work considers a static quasi-one-dimensional feed-forward SFRJ with variable cowl control, modified based on the model developed by DeBoskey et al. \cite{deboskey2025augmentation}.  Figure \ref{IntegratedSystemDiagram} shows a schematic of an SFRJ platform with thermodynamic stations labeled.  Freestream air, station ``0'', is ingested and compressed through the inlet and isolator system, station ``1'', and enters the combustor entrance, station ``2''.  The solid fuel is embedded within the combustion chamber in station ``3''; the port radius $r_3$ increases as the fuel is consumed during combustion.  
Additional mixing and combustion occurs at the combustor aft-end, station ``4'', before expansion and exhaust through the nozzle, station ``e''. 

\begin{figure}[h]
\centering
    \includegraphics[width=1\columnwidth]{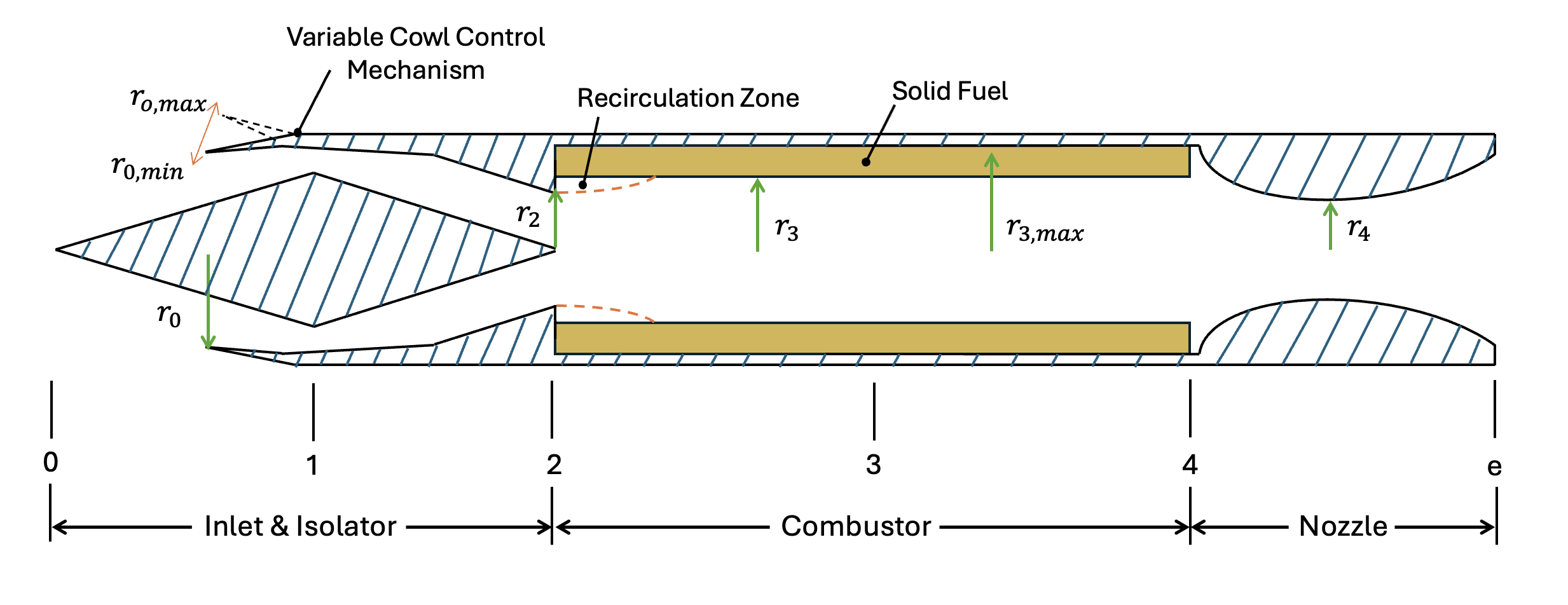}
    \caption{Schematic of model SFRJ projectile with thermodynamic stations labeled.  System performance is controlled through the translation of a variable cowl mechanism \cite{deboskey2025augmentation}.}
    \label{IntegratedSystemDiagram}
\end{figure}

A generic axisymmetric nose-spike center body inlet with a variable geometry cowl is selected for the design. 
The nose-spike and cowl design loosely resembles the NASA 1507 inlet \cite{slater2024wind}. 
The variable geometry mechanism adjusts both the location of the cowl tip in the $x$-direction and the angle of the cowl.
This corresponds to a translation in the cowl tip along the conical shock angle of the center body at the cruise Mach condition. 
The translation is represented in terms of the cowl radius, $r_0$, which is equivalent to the freestream capture area radius.  
This approach is similar to other cowl-based variable geometry methods in which the cowl angle and cowl location are investigated as separate operations \cite{Teng.et.al-CowlLocation,dalle2011performance,Reardon.et.al-CompUnstart,Liu.et.al-Restart,RamprakashMuruganandam-ExpUnstart,TengJianHuacheng-VariableCowl}.  
In \cite{deboskey2025augmentation}, a series of parametric 2D axisymmetric Reynolds-Averaged Navier-Stokes (RANS) simulations were performed on a structured two-dimensional grid with an increasing freestream capture area.  Isolator exit mass flow rate, mass-averaged stagnation pressure, and mass-averaged Mach number were calculated and curve-fitted as a function of $r_0.$
This model is extended to work across a range of altitudes by normalizing the isolator exit quantities based on freestream conditions.

The isolator exit conditions are then fed forward to a quasi-one-dimensional combustor model.
The regression rate of the solid fuel, $\dot{r}$, is assumed to depend on the inlet mass flux, $G$, total temperature, $T_{t2}$, and combustor pressure, $P_4$, and is given by
\begin{equation}\label{regression}
    \dot{r} =  \alpha G_{a}^{a} P_4^{b} T_{t2}^{c} ,
\end{equation}
\noindent where $\alpha$, $a$, $b$, and $c$ are empirical curve fits from Vaught et al. \cite{vaught1992investigation}.  This work considers a hypothetical model hydroxyl-terminated polybutadiene (HTPB) polymer that is completely converted to the gaseous hydrocarbon, 1,3-butadiene ($C_4 H_6$).  The model assumes that the fuel grain only burns radially, with uniform regression along the axial length of the combustor.
To calculate the combustor aft-end mixing pressure for \eqref{regression}, a simple friction correlation is used to model internal friction, which implies that 
\begin{equation}\label{pressuredrop}
    \Delta P_{t,2-4} = \frac{f_D}{4} \frac{L_f}{d} \frac{\rho_3 u_3^2 }{2}  ,
\end{equation}
where $\Delta P_{t,2-4}$ is the pressure change between station ``2'' and station ``4'', $f_D$ is the Darcy friction factor, $L_f$ is the length of the fuel grain, $\rho$ is the port fluid density, and $u_3$ is the port velocity.

Due to the solid-fuel regression, the solid-fuel port radius, $r_3$, is constantly increasing during SFRJ operation and is updated based on \cite{Hadar1992,evans2023performance} as
\begin{equation}
    r_{3}(t_{i+1}) = r_{3}(t_{i}) - \dot{r}(t_i) (t_{i+1}-t_{i}).
\end{equation}
The simulations are time-integrated until the port radius reaches the maximum allowable radius, $r_{3,\rm max}$. Table \ref{nominalgeo} details the SFRJ geometry considered in this work. 
\begin{table}[h] 
 	\centering
 	\caption{Description of SFRJ geometry}
 	\begin{tabular}{ c c c c c}
 		\hline \hline
 		$r_{0}$ [mm] & $r_{2}$ [mm] & $r_3$ &  $r_{t}$ [mm] & $L_{f}$ [mm]\\
 		\hline
 		[47.88,59.28] & 46.7 & [59.2,68.6] & 50.4 & 500\\
            \hline
 	\end{tabular}
 	\label{nominalgeo}
\end{table}

At each time-step, the fuel mass flow rate of 1,3-butadiene, $\dot{m}_f$, is given by
\begin{equation}
    \dot{m}_f 
        =
            A_f \rho_f \dot{r},
\end{equation}
where $A_f = 2 \pi r_3 L_f$ is the exposed surface area of the fuel grain and $\rho_f$ = 900 kg/m$^3$ is the density of HTPB.
Note that a global equivalence ratio, $\phi_G$ can be calculated by
\begin{equation}
    \phi_G = \frac{f}{f_{stoich}} = \frac{\dot{m}_f / \dot{m}_{air}}{f_{stoich}} ,
\end{equation}
where $f_{stoich}$ is the stoichiometric fuel-to-air mass flow ratio.  
The equilibrium flame temperature, $T_{4,\rm eq}$, and the ratio of specific heats, $\gamma_{4}$, at the aft mixing end are calculated using an equilibrium Gibbs solver, assuming constant enthalpy and pressure, in CANTERA \cite{cantera}.  The initial mixture composition is determined by the calculated global equivalence ratio $\phi_G$, the inlet air temperature $T_2$, and the aft-end pressure $P_4$ .  
A pressure-comprehensive skeletal kinetics mechanism for 1,3-butadiene combustion developed by Ciottoli et al. \cite{Ciottoli2017} is used to calculate equilibrium products.
Previous work has validated the skeletal mechanism against detailed 1,3-butadiene combustion mechanisms \cite{deboskey_mechs_2024,DEBOSKEY2025CNF}.  The final static temperature at the aft-mixing end of the SFRJ combustor, $T_{4}$, is calculated, assuming a constant combustion efficiency of $\eta_c$ = 0.75, as
\begin{equation}
      T_{4} = \eta_c ( T_{4,\rm eq} - T_{2}) + T_{2}.
\end{equation}


Using isentropic expansion to ambient pressure, the aft-end combustor total pressure and temperature are used to determine a theoretical exhaust velocity, $u_{\rm e,th}$, which is computed as
\begin{align}
    u_{\rm e,th} 
        =
            \big[ 
                2\gamma_4 R_4 T_{t4}(1/(\gamma_4-1)) 
                (1 - (P_0 / P_{t4})^{(\gamma_4-1)/\gamma_4})
            \big]^{1/2},
\end{align}
\noindent where $R_4$ and $\gamma_4$ are the specific gas constant and ratio of specific heats, respectively, calculated using the equilibrium composition from station ``4'' in CANTERA.  
This work assumes an idealized nozzle that expands gas at the combustor exit to ambient pressure and operates with a nozzle efficiency, $\eta_n$ = 0.95, to calculate the actual exhaust velocity, $u_{e}$.  
Finally, the generated thrust, $T$, is given by
\begin{equation}\label{thrustcalc}
    T = \dot{m}_{air} (1 + f) u_e - \dot{m}_{air} u_0.
\end{equation}

\section{Dynamic Mode Adaptive Control}
\label{sec:DMAC}
This section briefly reviews the dynamic mode adaptive control (DMAC) algorithm, presented in \cite{oveissi2025modelfreedynamicmodeadaptive}. 
As shown in Figure \ref{fig:DMAC_architecture}, consider a dynamic system in a basic servo loop architecture whose input is $u_k \in \BBR^{l_u}$ and the output is $y_k \in \BBR^{l_y}.$
The objective of the DMAC controller is to generate a discrete-time input signal $u_k$ such that the sampled output $y_k$ tracks the reference signal $r_k.$
The DMAC controller consists of a dynamic mode approximation to approximate the local system behavior and a tracking controller designed based on the identified input-output model.

    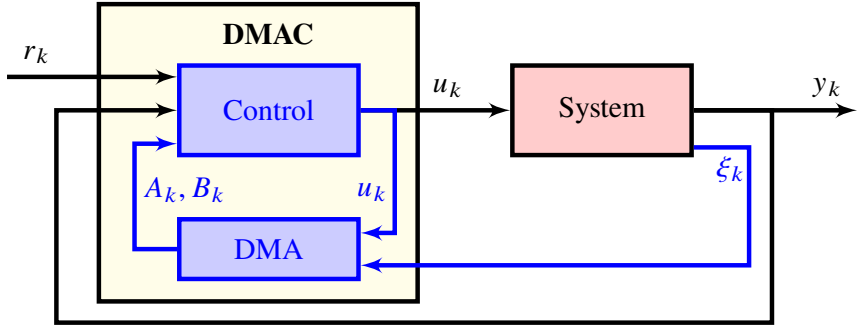
\begin{figure}[htbp]
    \centering
    \resizebox{0.7\columnwidth}{!}
    {
    \begin{tikzpicture}[auto, node distance=2cm,>=latex',text centered, line width = 1.5]

        \draw[draw=black, fill=yellow!10] (-2,-2.25)              
             rectangle ++(3.75,3.5) node [xshift=-5.0em, yshift=-1em] {\textbf{DMAC}} ;

             
        \node [smallblock, blue, fill = blue!20, minimum width=6em, minimum height=3em] (Controller) {Control};

        \node [smallblock, , fill = red!20, right = 5 em of Controller, minimum width = 6em, minimum height=3em] (plant) {System};

        

        
        \node [smallblock, blue, fill = blue!20, below = 2 em of Controller, minimum width=6em] (DMA) {DMA};

        \draw[<-] (Controller.160) -- +(-2,0) node[xshift = 1em, yshift = 0.75em]{$r_k$};
        \draw[->] (plant) -- +(3,0) node[xshift = -1em, yshift = 0.75em]{$y_k$};        
        \draw[->] (Controller) node[xshift = 6em, yshift = 0.75em]{$u_k$} -- (plant);

        \draw[->] (plant) -| +(2,-2.5) |- (-2.5,-2.5) |-(Controller.180);
        \draw[->,blue] (plant.-22) node[xshift = 1.25em, yshift = -0.75em]{$\xi_k$} -| +(0.65,-1.25) |-(DMA.-10);
        
        \draw[blue,->] (Controller.0) -| +(0.4,-1) node[xshift = -0.75em, yshift = 0.1em]{$u_k$} |- (DMA.10);
        \draw[blue,->] (DMA.180) node[xshift = 0.25em, yshift = 2em]{$A_k, B_k$} -| +(-0.5,1)  |- (Controller.200);

    \end{tikzpicture}
    }
        \caption{Dynamic Mode Adaptive Control (DMAC) architecture for model-free, data-driven, and learning-based control of dynamic systems.         
        }
        \label{fig:DMAC_architecture}
    \end{figure}

\subsection{Dynamic Mode Approximation}
\label{sec:DynApprox}
Let $\xi_k \in \BBR^{l_\xi}$ denote the measured portion of the state of the system.
Note that $\xi_k$ may or may not be the entire state of the system.
To compute the control signal $u_k$, we first approximate linear maps $A \in \BBR^{l_\xi \times l_\xi}$ and $B \in \BBR^{l_\xi \times l_u}$ such that 
\begin{align}
    \xi_{k+1} = A \xi_{k} + B u_{k},
    \label{eq:linear_approximation}
\end{align}
which can be reformulated as
\begin{align}
    \xi_{k+1} = \Theta \phi_k.
    \label{eq:linear_approximation_AxForm_1}
\end{align}
where 
\begin{align}
    \Theta &\isdef \matl A  & B \matr \in \BBR^{l_\xi \times (l_\xi + l_u)}, 
    \quad
    \phi_k \isdef \matl \xi_{k} \\ u_{k} \matr \in \BBR^{l_\xi+l_u}. 
\end{align}
A matrix $\Theta$ such that \eqref{eq:linear_approximation_AxForm_1} is satisfied may not exist. 
However, an approximation of such a matrix can be obtained by minimizing 
\begin{align}
    J_k (\Theta)
        \isdef 
            \sum_{i=0}^{k} &\lambda^{k-i} \| \xi_{k} - \Theta \phi_{k-1} \|^2_2 
            +
            \lambda^k \tr (\Theta^\rmT R_\Theta \Theta) ,
    \label{eq:J_k_def}
\end{align}
where
$R_\Theta \in \BBR^{(l_\xi+l_u) \times (l_\xi+l_u)}$ is a positive definite regularization matrix that ensures the existence of the minimizer of \eqref{eq:J_k_def}
and 
$\lambda \in (0,1]$ is a forgetting factor. 
In nonlinear or time-varying systems, $\Theta$ approximated by minimizing \eqref{eq:J_k_def} in the case where the state varies significantly in the state space may not be able to capture the local linear behavior at the current state.
Thus, incorporating a geometric forgetting factor to prioritize recent data over older data improves the linear approximation and prevents the algorithm from becoming sluggish.

\begin{proposition}
    Consider the cost function \eqref{eq:J_k_def}.
    For all $k\geq 0,$ define the minimizer of \eqref{eq:J_k_def} as
    \begin{align}
        \Theta_k 
            \isdef 
                \min_{\Theta \in \BBR^{l_x \times (l_x + l_u)}} J_k(\Theta).
    \end{align}
    Then, the minimizer $\Theta_k$ satisfies
    \begin{align}
        \Theta_k
            &=
                \Theta_{k-1} 
                +
                \left(
                    \xi_{k} - \Theta_{k-1} \phi_{k-1}
                \right)
                \phi_{k-1}^\rmT \SP_k  
            , \\
        \SP_k
            &=
                \lambda \inv \SP_{k-1} 
                -
                \lambda \inv
                \SP_{k-1} \phi_{k-1}
                \gamma_k \inv 
                \phi_{k-1}^\rmT \SP_{k-1},
    \end{align}
    where, for all $k \geq 0,$ 
    $\gamma_k \isdef \lambda  +  \phi_{k-1}^\rmT \SP_{k-1} \phi_{k-1},$ and  
    $\Theta_0 = 0,$
    $\SP_0 \isdef R_\Theta\inv. $
\end{proposition}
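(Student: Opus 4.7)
The plan is to recognize this as the classical recursive least squares (RLS) derivation with exponential forgetting and Tikhonov regularization, and to carry it out in four steps: (i) write the batch minimizer in closed form via the normal equation, (ii) identify a running information matrix $\SP_k\inv$ and cross-correlation $Y_k$ together with their one-step recursions, (iii) invert the resulting rank-one update with the Sherman-Morrison identity to obtain the stated recursion for $\SP_k$, and (iv) derive the update for $\Theta_k$ from $\Theta_k = Y_k \SP_k$.

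First, I would differentiate $J_k(\Theta)$ with respect to $\Theta$ (the data term is quadratic in $\Theta$ after expanding $\|\xi_i - \Theta\phi_{i-1}\|_2^2$, and the regularizer is a symmetric quadratic form) and set the derivative to zero, yielding the normal equation $\Theta_k \SP_k\inv = Y_k$, where $\SP_k\inv \isdef \sum_{i=0}^{k}\lambda^{k-i}\phi_{i-1}\phi_{i-1}^\rmT + \lambda^k R_\Theta$ and $Y_k \isdef \sum_{i=0}^{k}\lambda^{k-i}\xi_i\phi_{i-1}^\rmT$. Positive definiteness of $R_\Theta$ ensures that $\SP_k\inv$ is strictly positive definite and hence invertible, so that $\Theta_k = Y_k \SP_k$. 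Splitting off the $i=k$ summand from each sum then immediately yields the one-step recursions $\SP_k\inv = \lambda \SP_{k-1}\inv + \phi_{k-1}\phi_{k-1}^\rmT$ and $Y_k = \lambda Y_{k-1} + \xi_k \phi_{k-1}^\rmT$, with empty-sum base cases $\SP_0\inv = R_\Theta$ and $Y_0 = 0$, consistent with the stated initial conditions $\SP_0 = R_\Theta\inv$ and $\Theta_0 = 0$.

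Next, I would invert the rank-one update in order to pass from the information form to the covariance form. Applying the Sherman-Morrison identity to $A + uv^\rmT$ with $A = \lambda \SP_{k-1}\inv$ and $u = v = \phi_{k-1}$, and then absorbing the factor $\lambda\inv$ arising from $A\inv = \lambda\inv \SP_{k-1}$ into the scalar denominator, produces exactly the claimed recursion for $\SP_k$ with $\gamma_k = \lambda + \phi_{k-1}^\rmT \SP_{k-1}\phi_{k-1}$. This is the only algebraic step with any substance, and even it is standard.

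Finally, I would derive the parameter update by substituting the $Y_k$ recursion into $\Theta_k = Y_k \SP_k$ to obtain $\Theta_k = \Theta_{k-1}(\lambda \SP_{k-1}\inv \SP_k) + \xi_k \phi_{k-1}^\rmT \SP_k$, and then using the identity $\lambda \SP_{k-1}\inv \SP_k = I - \phi_{k-1}\phi_{k-1}^\rmT \SP_k$, which follows by right-multiplying the information-form recursion by $\SP_k$. Substituting and grouping terms yields $\Theta_k = \Theta_{k-1} + (\xi_k - \Theta_{k-1}\phi_{k-1})\phi_{k-1}^\rmT \SP_k$, as claimed. I do not expect a serious obstacle; the only care needed is bookkeeping around the regularizer dimensions and the indexing convention at the boundary, namely that at $k=0$ only the regularizer term survives and seeds the recursion with $(\Theta_0,\SP_0)=(0,R_\Theta\inv)$.
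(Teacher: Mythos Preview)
Your proposal is correct and is the standard recursive-least-squares derivation with exponential forgetting and Tikhonov regularization; each of the four steps goes through exactly as you outline, and the only minor bookkeeping caveat is that the regularizer should be read as $\tr(\Theta R_\Theta \Theta^{\rmT})$ so that the dimensions of $R_\Theta\in\BBR^{(l_\xi+l_u)\times(l_\xi+l_u)}$ are consistent. The paper itself does not give an in-line proof but simply refers the reader to Proposition~V.2 of \cite{oveissi2025modelfreedynamicmodeadaptive}, which carries out precisely the normal-equation plus Sherman--Morrison argument you propose, so your approach coincides with the intended one.
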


\begin{proof}
    See Proposition V.2 in \cite{oveissi2025modelfreedynamicmodeadaptive}.
\end{proof}

Note that the cost function \eqref{eq:J_k_def} is a matrix extension of the cost function typically considered in engineering applications \cite{goel2020recursive}.
As shown in \cite{Mareels1986,Mareels1988,goel2020recursive}, persistency of excitation is required to ensure that 1) the estimate converges and 2) the corresponding covariance matrix $\SP_k$ remains bounded. 
To ensure the persistency of excitation, in this paper, we introduce a zero-mean white noise in the control signal to promote persistency in the regressor $\phi_k$, as discussed in Section \ref{sec:controlUpdate}.

\subsection{Tracking Controller}
\label{sec:controlUpdate}
This subsection presents the algorithm to compute the control signal $u_k$ using the dynamics approximation computed in Section \ref{sec:DynApprox}.
To track the reference signal $r_k  \in \BBR$, the DMAC algorithm uses the fullstate feedback controller with integral action.
Note that the full state refers to the state $\xi_k$ and not the system state $x_k.$ 
In this work, we assume that 
\begin{align}
    \xi_k 
        \isdef
            \matl P_{t4,k} \\ X_{\text{CO},k} \\ y_k \matr \in \BBR^3,
\end{align}
where 
$P_{t4,k}$ is the normalized total combustor pressure, $X_{\text{CO},k}$ is the normalized carbon monoxide concentration in the exhaust, and $y_k$ is the normalized thrust.
The concentration of carbon monoxide is determined from the equilibrium composition, calculated using the equilibrium Gibbs solver and 20-species pressure-comprehensive skeletal kinetics mechanism.  
The total combustor pressure and thrust are calculated from Equations \eqref{pressuredrop} and \eqref{thrustcalc}, respectively. 
The normalization is performed using a symmetric min-max transformation that maps each physical variable to the range $[-1, 1]$, ensuring similar magnitude across all state variables.
This transformation improves the numerical conditioning of the recursive least squares update used in the DMAC dynamic mode approximation and ensures that the regressor $\phi_k$ remains well-conditioned throughout the learning process.


The control law is 
\begin{align}
    u_k = K_{\xi,k} \xi_k + K_{q,k} q_k + v_k,
\end{align}
where
the matrices $K_{\xi,k} \in \BBR^{l_u \times l_\xi } $ and $K_{q,k} \in \BBR^{l_u \times l_y}$ are the time-varying fullstate feedback gain and the integrator gain, computed using the technique shown in Appendix A of \cite{oveissi2025modelfreedynamicmodeadaptive} and $v_k \sim \SN(0,\sigma_v I_{l_u})$ is a zero-mean white noise signal added to the control to promote persistency in the regressor $\phi_k$ used in the dynamic mode approximation step.
Note that the integrator state $q_k$ satisfies
\begin{align}
    q_{k+1}
        =
            q_k + z_k,
\end{align}
where $z_k \isdef r_k - y_k$ is the output error. 

The gain matrices $K_{\xi,k}$ and $K_{q,k}$ are computed using the well-known linear-quadratic-integral control \cite{young1972approach}, which requires the $A, B, $ and $C$ matrices of the system. 
In this work, we use the MATLAB's \texttt{lqi} routine to compute the gain matrices $K_{\xi,k}$ and $K_{q,k}.$
The dynamics matrix $A$ and the input matrix $B$ are given by the dynamic mode approximation described in \ref{sec:DynApprox}.
Note that the output matrix is given by
\begin{align}
    C = \matl 0 & 0 & 1 \matr,
\end{align}
which reflects the fact that the thrust is the third entry in the DMAC state vector and is directly selected as the system output.

\section{Simulation Results}
\label{sec:prelim_results}
This section presents numerical examples that demonstrate the application of the DMAC technique for regulating the thrust generated by the SFRJ. The DMAC hyperparameters were selected through a simple grid search to achieve a satisfactory transient response under nominal conditions.

The control signal $u_k$ is used to modulate the capture radius $r_0$ as
\begin{align}
    r_0 = \overline{r}_0 - 0.001 u_k,
\end{align}
where $\overline{r}_0 = 53.58$~mm is the nominal capture radius. The scaling factor ensures that the regressor components used for dynamic mode estimation have similar magnitudes, thereby improving numerical stability during recursive least squares updates.

\subsection{Command Following}
First, we consider the problem of regulating the SFRJ thrust to a constant value. 
In particular, the SFRJ is commanded to generate a constant thrust value of $r = 100 $ $\rmN$. 
%
Since $\xi_k \in \BBR^3$ and $u_k \in \BBR,$ it follows that $\Theta_k$ is a $3 \times 4$ matrix. 
In DMAC, we set $R_\Theta = 10^2 I_4$ and the forgetting factor $\lambda = 0.999$.
The LQR weights in the tracking controller are $R_1 = I_4$  and $R_2 = 1.$

Figure \ref{fig:DMAC_SFRJ_NCSU_steps_1000_lambda_0_999_R0_1e2_Q_1_R_1_sysdim_3_ref_SingleStep_100_noise_1e_neg3_NNthrust} shows the closed-loop response of the SFRJ with the DMAC algorithm updating the controller, where
a) shows the commanded thrust $r=100$ $\rmN$ and the generated thrust $y_k,$ b) shows the control signal $u_k,$ c) shows the absolute value of the tracking error $z_k \isdef y_k - r$ on a logarithmic scale, and d) shows the estimate matrix $\Theta_k$ computed by DMAC.
Note that the thrust output tracks the commanded thrust. 

\begin{figure}[H]
    \centering
    \includegraphics[width=0.7\columnwidth]{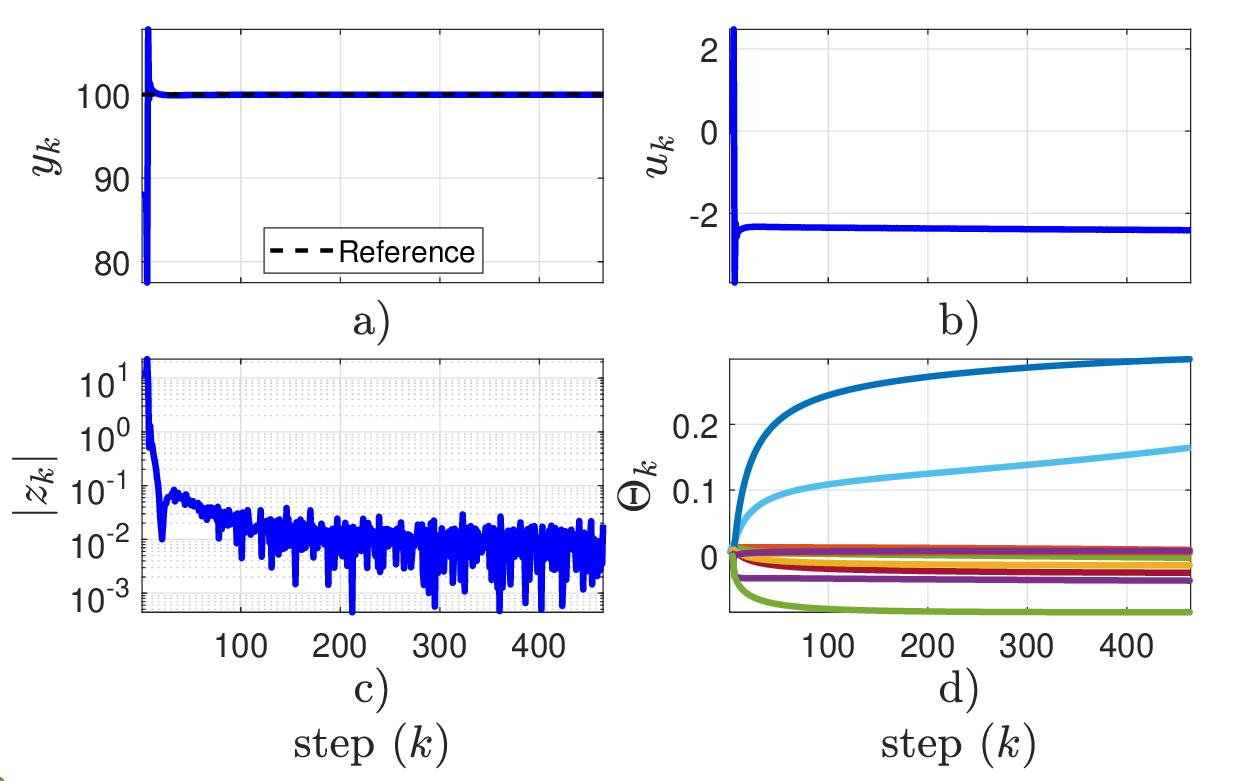}
    \caption{Closed-loop step response of SFRJ with DMAC.
    } 
    \label{fig:DMAC_SFRJ_NCSU_steps_1000_lambda_0_999_R0_1e2_Q_1_R_1_sysdim_3_ref_SingleStep_100_noise_1e_neg3_NNthrust}
\end{figure}

Next, the SFRJ is commanded to follow a double-step command. 
Specifically, the thrust command is $r = 100 $ $\rmN$ for $k \in (0,200)$ and $r = 110$ $\rmN$ for $k \geq 200.$
We emphasize that the DMAC hyperparameters are kept the same as in the previous case.  
Figure \ref{fig:DMAC_SFRJ_NCSU_steps_1000_lambda_0_999_R0_1e2_Q_1_R_1_sysdim_3_ref_DoubleStep_100_110_noise_1e_neg3_NNthrust} shows the closed-loop response of the SFRJ with the DMAC algorithm updating the controller, where
a) shows the commanded thrust $r$ and the generated thrust $y_k,$ b) shows the control signal $u_k,$ c) shows the absolute value of the tracking error $z_k \isdef y_k - r$ on a logarithmic scale, and d) shows the estimate matrix $\Theta_k$ computed by DMAC. Note that the output error approaches zero.

\begin{figure}[H]
    \centering
    \includegraphics[width=0.7\columnwidth]{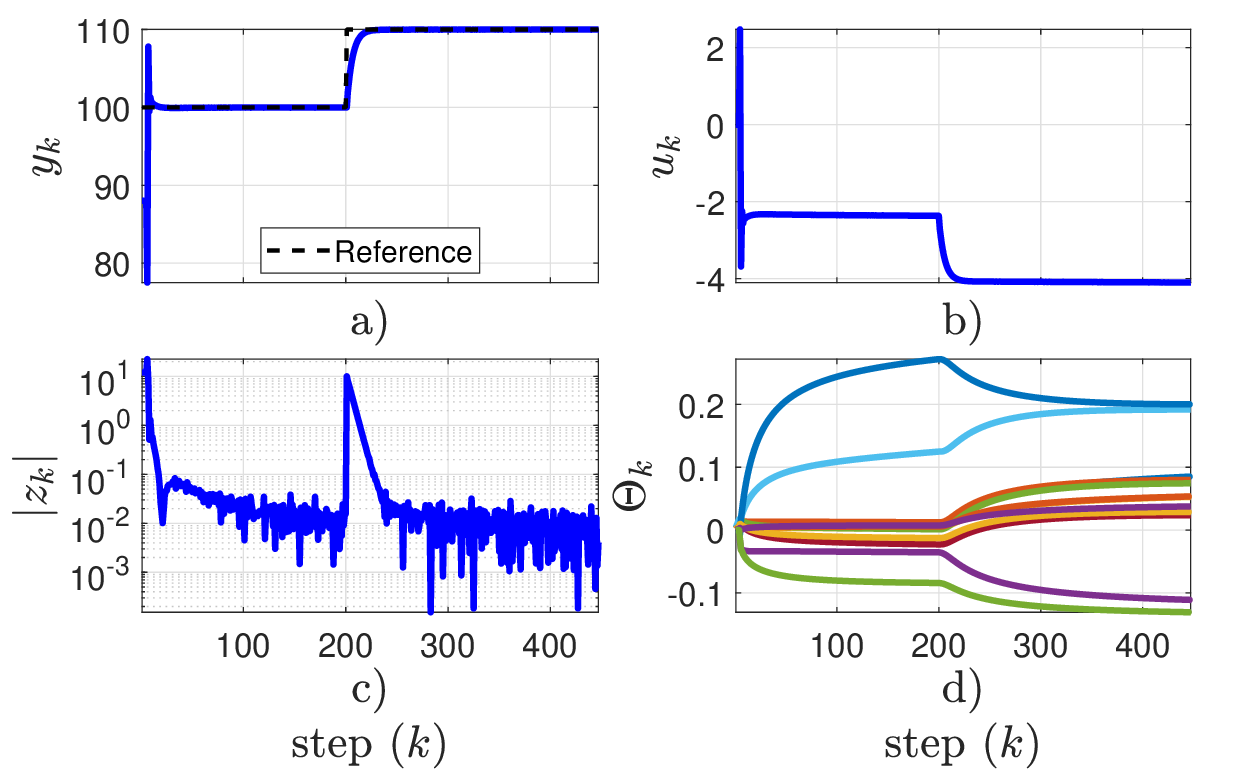}
    \caption{Closed-loop response of SFRJ with DMAC to double step reference.} 
    \label{fig:DMAC_SFRJ_NCSU_steps_1000_lambda_0_999_R0_1e2_Q_1_R_1_sysdim_3_ref_DoubleStep_100_110_noise_1e_neg3_NNthrust}
\end{figure}

\subsection{Controller Sensitivity Analysis}
A sensitivity analysis was performed to show the robustness of the control algorithm against variations in the DMAC hyperparameters and changes in the SFRJ model, as reflected in the closed-loop performance.

\subsubsection{Effect of Hyperparameters}
First, to investigate the robustness of the DMAC algorithm to its tuning hyperparameters $R_\Theta, \lambda, R_1$ and $R_2,$  we vary each of the hyperparameters systematically by keeping other hyperparameters at their nominal values.
Figure~\ref{fig:DMAC_SFRJ_NCSU_steps_1000_DMAC_Sensitivity_sysdim_3_ref_SingleStep_100_noise_1e_neg3_NNthrust}
shows the effect of the DMAC hyperparameters on the closed-loop response $y_k$ and the absolute tracking error $|z_k|$.
From top to bottom, the rows correspond to variations in $R_\Theta$, $\lambda$, $R_1$, and $R_2$, respectively. Note that, in each case, the hyperparameter is varied by a few orders of magnitude, suggesting that DMAC is robust to tuning parameters.

\begin{figure}[H]
    \centering
    \includegraphics[width=0.7\columnwidth]{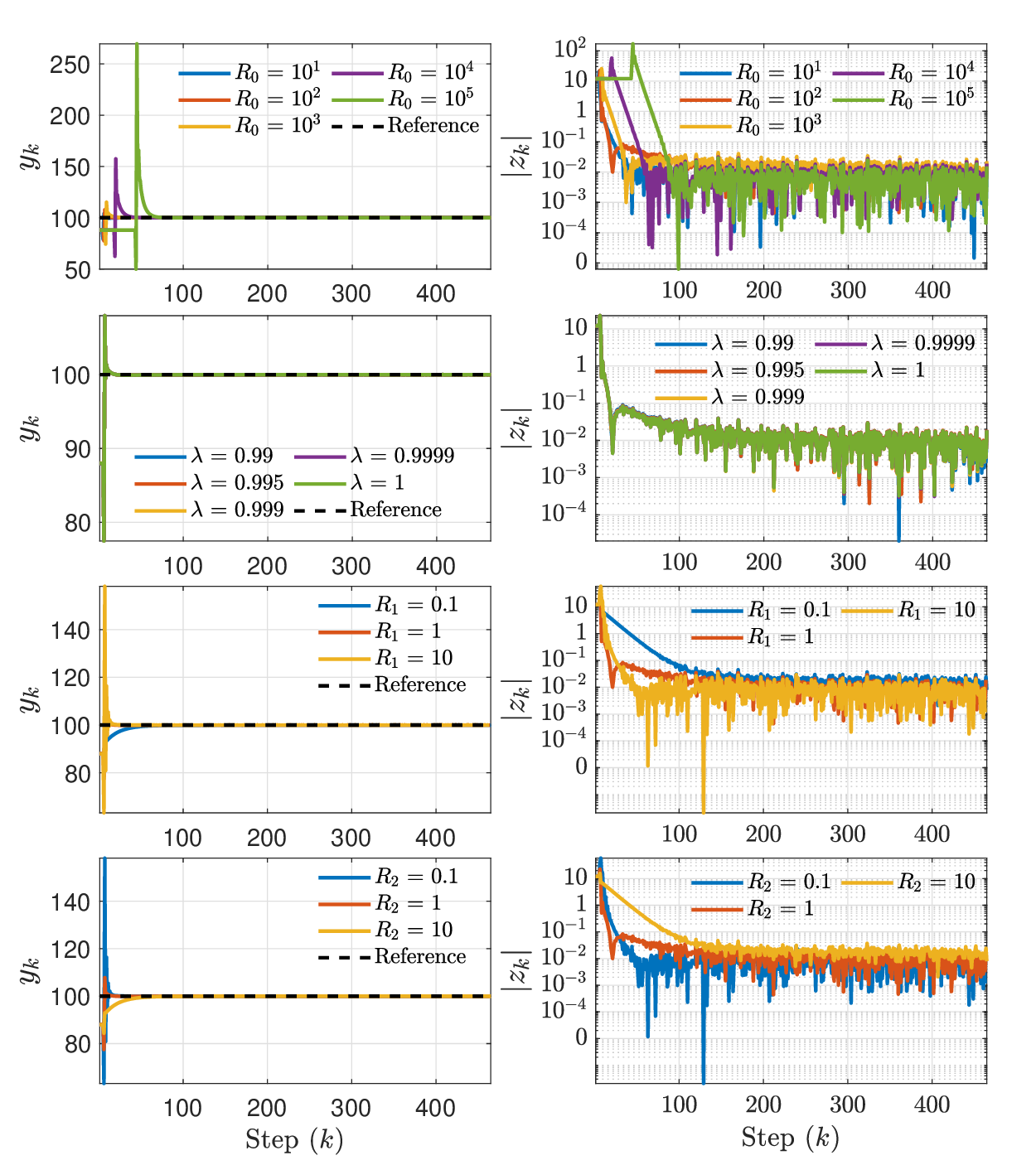}
    \caption{Effect of DMAC hyperparameters on the closed-loop performance.} 
    \label{fig:DMAC_SFRJ_NCSU_steps_1000_DMAC_Sensitivity_sysdim_3_ref_SingleStep_100_noise_1e_neg3_NNthrust}
\end{figure}

\subsubsection{Monte Carlo Testing of Controller Robustness}
Next, the robustness of the DMAC controller to variations in SFRJ model performance was evaluated using the Monte Carlo method (MCM). 
Two separate tests were conducted to assess reliable tracking performance across different engine architectures and altitude–thrust command conditions.

In the first test, the SFRJ model parameters $\alpha$ and $\eta_c$ were modeled as Gaussian random variables and sampled prior to executing a step command-following task, in which the SFRJ was commanded to produce a constant thrust of $100$~N at an altitude of $H = 30$~km.
These parameters were chosen for randomization because thrust generation is highly sensitive to the propellant regression rate and combustion efficiency, which are governed by $\alpha$ and $\eta_c$, respectively.
The mean values of $\alpha$ and $\eta_c$ were $4.44\times10^{-7}$ and $0.75$, respectively, with corresponding standard deviations of $4.44\times10^{-8}$ and $0.05$.
A total of 200 Monte Carlo trials were performed, each using a distinct quasi-static SFRJ model realization, to evaluate controller robustness under these uncertainties.

The Monte Carlo results are shown in Figure \ref{fig:MCM}. Of the 200 randomly sampled quasi-static models, 195 simulations converged, corresponding to a 97.5\% success rate, demonstrating the robustness of the DMAC algorithm to variations in SFRJ model parameters. 
Although the variations in $\alpha$ and $\eta_c$ produced significant differences in the initial thrust estimates, the controller consistently achieved accurate step command tracking. 
Note that no retuning of the DMAC hyperparameters was performed across the sampled models; that is, all 200 simulations were executed using the same controller configuration.

\begin{figure}[h]
    \centering
    \includegraphics[width=0.49\columnwidth]{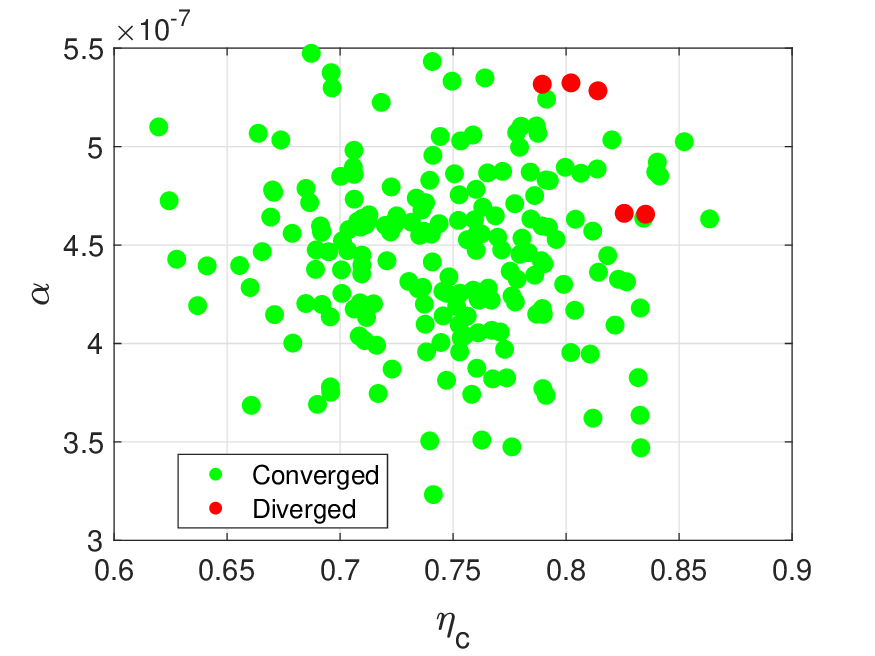}
    \includegraphics[width=0.49\columnwidth]{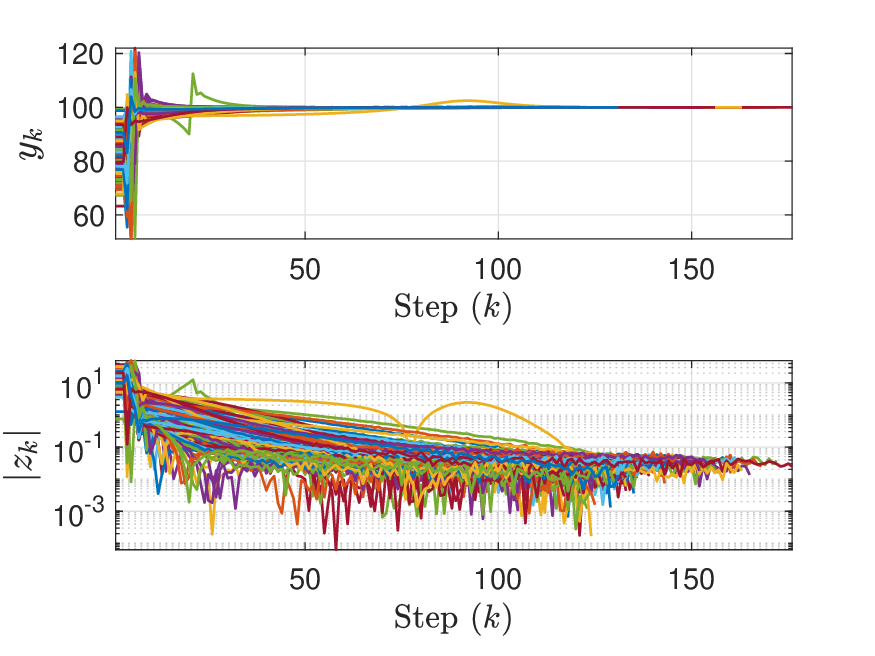}
    \caption{Demonstration of DMAC robustness to SFRJ model performance: a) scatterplot of solution convergence and b) time history of thrust tracking (only converged solutions)}
    \label{fig:MCM}
\end{figure}

In the second test, the SFRJ model parameters were fixed at their nominal values, 
$\alpha = 4.44 \times10^{-7}$ and $\eta_c = 0.75,$ while the engine was commanded to track randomly generated thrust profiles at randomly selected altitudes. 
The altitude $H$ was sampled uniformly from 23 km to 36 km, and for each selected altitude, five thrust commands were randomly chosen within the model’s feasible thrust range, excluding a 5\% margin from the minimum and maximum limits. 
A total of 200 Monte Carlo trials were performed to evaluate the controller’s ability to track diverse thrust–altitude command pairs in the step command-following task.

\begin{figure}[h]
    \centering
    \includegraphics[width=0.49\columnwidth]{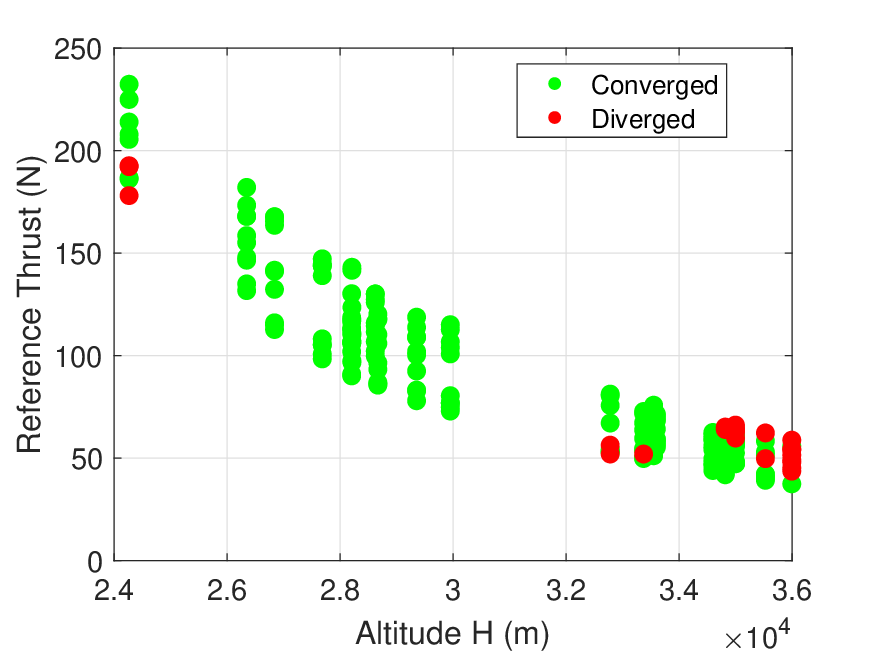}
    \vspace{-1em}
    \includegraphics[width=0.49\columnwidth]{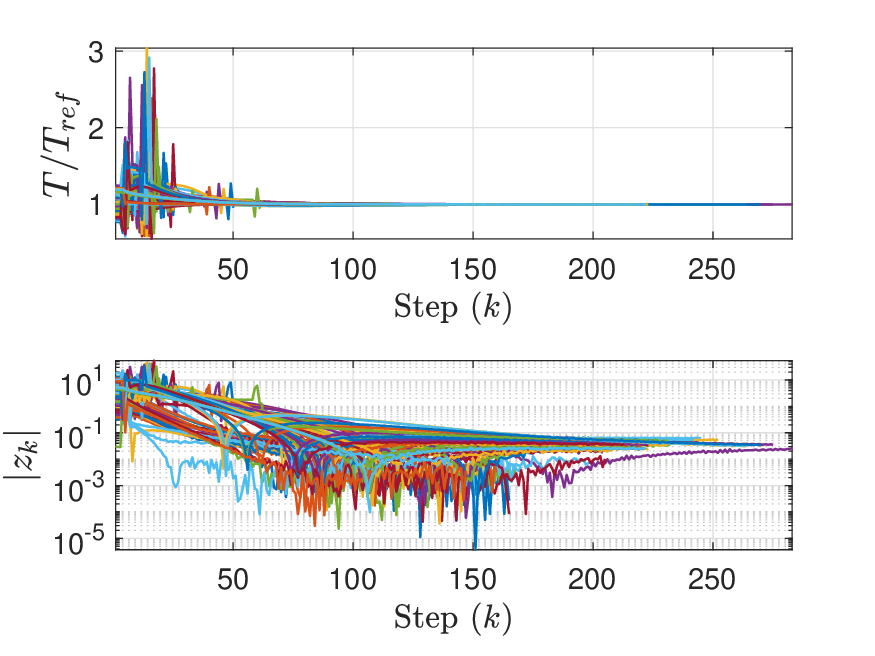}
    \caption{Demonstration of DMAC robustness to altitude-thrust commanding: a) scatterplot of solution convergence and b) time history of normalized thrust tracking (only converged solutions)}
    \label{fig:MCMH}
\end{figure}

The results of the second Monte Carlo trial are shown in Figure \ref{fig:MCMH}. 
Of the 200 randomly generated thrust–altitude pairs, 177 simulations converged, corresponding to a 88.5\% success rate. 
These results further demonstrate the robustness of the DMAC algorithm across a broad hypothetical operational envelope. Divergence was observed primarily for low thrust commands at lower altitudes, whereas performance remained consistently accurate in the higher thrust regime $(50 \ \rmN <T< 200 \ \rmN)$.
This indicates that retuning of DMAC hyperparameters is necessary only for operating conditions well beyond the nominal range, and only a limited number of tuned configurations would be required to ensure reliable tracking across the entire operational window.
As with the previous trial, the same DMAC configuration was used for all 200 simulations, highlighting the algorithm’s inherent robustness compared to traditional fixed-gain controllers, which typically require manual retuning under model uncertainty or parameter variations.

\section{Conclusion}
\label{sec:conclusions}

This paper investigated the application of the dynamic mode adaptive control (DMAC) framework to regulate the thrust generated by a quasi-static one-dimensional model of an SFRJ with variable cowl geometry. 
The DMAC framework used limited measurements of combustor pressure, carbon monoxide concentration in the exhaust, and the thrust to synthesize a tracking controller in order to regulate the SFRJ thrust.
The simulation results indicate that the DMAC framework can successfully regulate the SFRJ thrust without requiring a detailed combustion dynamics model, instead using only limited online measurements. 
The additional numerical studies presented in this manuscript further evaluated the performance and robustness of the DMAC-based controller.
A sensitivity analysis showed that the closed-loop response is relatively insensitive to variations in the DMAC hyperparameters, indicating that the controller maintains satisfactory performance over a wide range of tuning selections. 
A comprehensive Monte Carlo evaluation assessed robustness to uncertainties in propellant regression and combustion efficiency, as well as variations in altitude and commanded thrust.
Across these tests, the controller achieved high convergence rates without hyperparameter retuning, highlighting the inherent robustness of the DMAC framework to model variability and changes in operating conditions.

\section{Acknowledgment}
This research was supported by the Office of Naval Research grant N00014-23-1-2468.

\bibliography{Paperbib}
\end{document}